\newenvironment{proof}{ \indent\textbf{Proof}.}{\hfill $\Box$}
\newtheorem{theorem}{Theorem}[section]
\newtheorem{lemma}[theorem]{Lemma}
\numberwithin{equation}{section}
\def\beq{\begin{equation}}
\def\eeq{\end{equation}}
\def\beqnn{\begin{eqnarray*}}
\def\eeqnn{\end{eqnarray*}}
\def\bea{\begin{eqnarray}}
\def\eea{\end{eqnarray}}
\def\figsize{.4\textwidth}
\newcommand{\eps}{\epsilon}
\newcommand{\R}{\mathbb{R}}
\begin{document}

\title{Fourier methods for the perturbed harmonic oscillator in linear and nonlinear Schr\"odinger equations}

\author{Philipp \surname{Bader}
}
\email{phiba@imm.upv.es}
\author{Sergio \surname{Blanes}
}
\email{serblaza@imm.upv.es}
\affiliation{   Universitat Polit\`ecnica de Val\`encia,
                Instituto de Matem\'atica Multidisciplinar,
                E-46022 Valencia, Spain}


\begin{abstract}
We consider the numerical integration of the Gross-Pitaevskii
equation with a potential trap given by a time-dependent harmonic
potential or a small perturbation thereof. Splitting methods are
frequently used with Fourier techniques since the system can be
split into the kinetic and remaining part, and each part can be
solved efficiently using Fast Fourier Transforms. To split the
system into the {quantum harmonic oscillator problem}
and the remaining part allows to get higher accuracies in many
cases, but it requires to change between Hermite basis functions
and the coordinate space, and this is not efficient for time-dependent
frequencies or strong nonlinearities.
We show how to build new methods which {combine} the advantages of
using Fourier methods {while solving the time-dependent harmonic
oscillator exactly (or with a high accuracy by using a Magnus
integrator and an appropriate decomposition)}.
\end{abstract}

\pacs{
02.60.-x, 
02.60.Cb,
02.60.Lj,
02.70.Hm    
}


\maketitle

\section{Introduction}  \label{sec.1}

The numerical integration of the Gross-Pitaevskii equation (GPE)
\[
     i \frac{\partial}{\partial t} \psi (x,t) = \left(%
                                                    -\frac{1}{2\mu} \Delta + V(x,t) +   \sigma(t) |\psi (x,t)|^2%
                                                \right) \psi(x,t)
\]
$x\in \R^d$, describing the ground state of interacting bosons at
zero temperature, the Bose-Einstein condensates, has attracted
great interest \cite{bao03jcp,perez03amc,thalhammer09jcp} after
the first experimental realizations \cite{bec-experiment}. We
present a new efficient way to solve a special class of GPE,
namely that of weakly interacting bosons in a single
time-dependent trap. To be more specific, the potential trap $V$
is taken to be a perturbation of the (time-dependent)
$d$-dimensional harmonic oscillator, i.e. $V(x,t)=x^TM(t)x + \eps
V_I(x,t)$ where $M(t)\in \R^{d\times d}$ is a positive definite
matrix and $\eps V_I(x,t)$ is a small perturbation. The real
scalar function $\sigma$ originates from the mean-field
interaction between the particles and corresponds to repulsive or
attractive forces for positive or negative values of $\sigma(t)$,
respectively \cite{pet01}.
Notice that the non-interacting case, $\sigma\equiv0$, corresponds to the linear Schr\"odinger equation.

Several methods have been analyzed to compute both the time evolution and the ground state of the
GPE in the course of the last decade \cite{bao03jcp,bao05jsc,dion03pre,perez03amc,thalhammer09jcp},
among them finite differences, Galerkin spectral methods and pseudospectral methods for Fourier or Hermite basis expansions.
It has been concluded \cite{perez03amc} that these pseudospectral methods perform best
for a wide parameter range for the GPE.
The Fourier type methods can be implemented with Fast Fourier Transform (FFT) algorithms
since the trapping $V$ causes the wave function to vanish asymptotically thus allowing
to consider the problem as periodic on a sufficiently large spatial interval.
Their advantages are high accuracy with a moderate number of mesh points and low computational cost.
For harmonic oscillator (HO) problems, however, the exact solution is known
and by expanding the solution in Hermite polynomials, highly accurate results
are obtained if the HO is solved separately \cite{bao05jsc,dion03pre,perez03amc,thalhammer09jcp}.

It is claimed \cite{perez03amc}, that either Hermite or Fourier pseudospectral methods are the most efficient,
the choice depending on the particular parameter set.
Motivated by these results, we show how both methods are combined to retain both the accuracy
of the Hermite method and the speed of the Fourier transforms, i.e. to rewrite
the Hermite method as a single simple pseudospectral Fourier scheme.
We have found, that this approximation performs, for the studied problem class,
always equal to or better than the original Fourier method and therefore has to compete
with Hermite expansions only.
Hermite schemes suffer from large computational costs when the number of basis terms in the expansions
is altered along the integration or taken very large,
being the case for time dependent trap frequencies $M(t)$ or strong nonlinearities $\sigma(t)$.
It is in this setup, where our new method substantially improves the Hermite performance,
and it can indeed be regarded as the optimal choice for the number of Hermite basis functions (for an equidistant grid)
at each time step.

For the ease of notation, we restrict ourselves to the one-dimensional problem
\beq  \label{GP1}
     i \frac{\partial}{\partial t} \psi = H_0(t) \psi +
                    \left( \varepsilon V_I(x,t) + \sigma(t) |\psi|^2 \right) \psi
\eeq where
\beq  \label{GP1b}
     H_0(t) = \frac{1}{2\mu} p^2 +  \frac{1}{2}\mu\omega^2(t) x^2
\eeq
and $p=-i\frac{\partial}{\partial x}$.
The boundary conditions imposed by the trap require the wave function
to go to zero at infinity, and up to any desired accuracy, we can assume $\psi (x,t)$
and all its derivatives to vanish outside a finite region, say $[a,b]$,
which we divide using a mesh (usually with $N=2^k$ points to allow a simple use of the FFT
algorithms). Then, the partial differential equation (\ref{GP1})
transforms into a system of ordinary differential equations (ODEs)
\beq\label{GP1ODE}
    i \frac{d}{d t} u (t) = \hat H_0(t) u(t) +
                            \left(\varepsilon V_I(X,t) + \sigma(t) |u(t)|^2 \right) u(t),
\eeq
and the harmonic part becomes
\beq\label{GPODE1b}
    \hat H_0(t) = T + \frac{1}{2}\mu\omega^2(t) X^2,
\eeq with $u\in\mathbb{C}^N$, where $u_i(t)\simeq \psi (x_i,t)$,
$x_i=a+ih, \quad i=0,1,\ldots, N-1, \ h=(b-a)/N$,
$X=diag\{x_0,\ldots,x_{N-1}\}$ and $T$ denotes a discretization of
the kinetic part.

This system of nonlinear ODEs can be numerically solved by
standard all purpose ODE methods. However, because of the
particular structure of this problem, different numerical methods
can differ considerably in accuracy as well as
in computational cost and stability. In addition, the structural properties
of the system lead to the existence of several preserved quantities like the
norm and energy (for the autonomous case).

The accurate preservation of these quantities as well as the error propagation and performance
of splitting methods explain why they are frequently recommended for the time integration
\cite{bao05jsc,perez03amc,thalhammer09jcp} and make them subject of investigation in this work.

\section{Splitting methods}

Let us consider the separable system of ODEs \beq\label{aut_eq}
        u'=A(u)+B(u), \qquad u(t_0)=u_0 \in \mathbb C^N,
\eeq
where we assume that both systems
\beq\label{equationsAB}
        u'=A(u), \qquad u'=B(u)
\eeq
can either be solved in closed form or accurately integrated. If
$\varphi^{[A]}_t$, $\varphi^{[B]}_t$ represent the exact flows
associated to (\ref{equationsAB}) then, to advance the solution
one time step, $h$, we can use, for example, the composition
$\psi_h^{[1]}= \varphi^{[A]}_{h} \circ \varphi^{[B]}_{h}$ (i.e.
$u(t_0+h)\simeq \psi_h^{[1]}(u_0)=\varphi^{[A]}_{h}\Big(
\varphi^{[B]}_{h}(u_0)\Big)$),
which is known as the first-order Lie-Trotter method.
A method has order $p$ if $\psi_h^{[p]}=\varphi_h+\mathcal{O}(h^{p+1})$ where $\varphi_t$
denotes the exact global flow of \eqref{aut_eq}. Sequential application of the two
first-order methods $\psi_h^{[1]}$ and its adjoint $\psi_h^{[1]*}=\varphi^{[B]}_{h} \circ \varphi^{[A]}_{h}$
with half time step yields the second order time-symmetric methods
\begin{align}
\psi_{h,A}^{[2]} &=  \varphi^{[A]}_{h/2} \circ \varphi^{[B]}_{h}
   \circ \varphi^{[A]}_{h/2}
\label{LeapFrogA}
\\
\psi_{h,B}^{[2]} &=  \varphi^{[B]}_{h/2} \circ \varphi^{[A]}_{h}
   \circ \varphi^{[B]}_{h/2}
\label{LeapFrogB}
\end{align}
(referred as $ABA$ and $BAB$ compositions). The contraction via the (1-parameter-)group
property of the flows that eliminated one computation is called First Same As Last (FSAL) property
and can also be used with higher order $m-$stage compositions
\begin{equation}\label{composition}
\psi_h^{[p]}=\varphi^{[A]}_{a_{m}h} \circ \varphi^{[B]}_{b_{m}h}
 \circ \dots  \circ
 \varphi^{[A]}_{a_1h} \circ \varphi^{[B]}_{b_1h}
\end{equation}
if $a_m=0$ or $b_m=0$ and repeated application of the scheme without requiring output.
For linear problems, it is usual to replace the flow-maps by exponentials (for nonlinear problems
the same is possible using exponentials of Lie operators).
In this notation, the equation $iu'=A(u)+B(u)$,
whose formal solution for the evolution operator is denoted by $\phi^{[A+B]}_t=e^{-it(A+B)}$,
is approximated for one time step, $h$, by the order $p$ composition  \eqref{composition} or,
equivalently, 
\begin{equation}  \label{prod-gen}
  \psi_h^{[p]} \equiv  e^{-ih a_m A} \, e^{-ih b_m B} \, \cdots \,
   e^{-ih a_1 A}  \, e^{-ih b_1 B}.
\end{equation}

We keep in mind that, in a nonlinear problem, if $B$ depends on
$u$, it has to be updated at each stage because $u$ changes during
the evolution of  $e^{-ih a_i A}$.

Backward error analysis shows that the action of a splitting
method is equivalent to solve exactly, for one time step $h$, a
perturbed differential equation (for sufficiently small $h$) that
can contain higher order commutators
\begin{align}\label{eq:BEA}
  u'     &= \left(A+B + h \alpha_{1,1} [A,B] \right.  \\
         &  \left.+ h^2( \alpha_{2,1}[A,[A,B]]+
                \alpha_{2,2}[B,[B,A]]) + \ldots \right) u   \nonumber
\end{align}
($[A,B]:=AB-BA$, and for simplicity, we have denoted $\
A=A(u)\cdot \nabla, \ B=B(u)\cdot \nabla$) where the coefficients
$a_i, b_i$ are chosen in order to cancel out the coefficients
$\alpha_{i,j}$ up to a given order.
It is thus important to analyze the dominant contributions to the
error from the commutators, in order to choose the most
appropriate method or to build new ones.

There exist many different splitting methods which are designed
for different purposes, depending on the structure of the problem,
the {desired} order, the {required} stability, etc.
\cite{blanes08sac,blanes02psp,hairer06gni,mclachlan95cmi,mclachlan02sm,yoshida90coh,suzuki90fdo}.
If the operator $A$ corresponds to the kinetic part, which is
quadratic in momenta, and the operator $B$ is the potential,
diagonal in coordinate space, the commutator $[B,[B,[B,A]]]$
vanishes and partitioned Runge-Kutta-Nystr\"om methods become
favorable. The coefficients for this family of methods have to
solve a significantly reduced number of order conditions and, in
general, their performance is superior to splitting methods
designed for general separable problems, in addition to being more
stable. Efficient schemes of order 4 and 6 are obtained in
\cite{blanes02psp}. On the other hand, when $H_0$ is the dominant
part, it is worth to take a closer look at the split \eqref{GP1}.
This would correspond to $\|B\|\ll \|A\|$ and for this case, when
facing autonomous problems, there exist tailored methods which
have shown a high performance in practice. {Writing} the equation
as $ iu' = (A + \epsilon B) u$ (with $\epsilon$ a small
parameter), it is clear that the local error of the second order
methods (\ref{LeapFrogA}) or (\ref{LeapFrogB}) comes from the
commutators at third order ($[A,[A,\epsilon B]]$ and $[\epsilon
B,[A,\epsilon B]]$) and we can say that the local error is of
order $\mathcal{O}(\epsilon h^3+\epsilon^2 h^3)$. The coefficients
$a_i,b_i$ in the general composition (\ref{prod-gen}) can be
chosen to cancel the dominant error terms, say, the
$\mathcal{O}(\epsilon h^r)$ terms for relatively large values of
$r$. Then, one can denote the {effective} order of a method by
$(r,p)$ with $r\geq p$ when the local error is given by
$\mathcal{O}(\epsilon h^{r+1}+\epsilon^2 h^{p+1})$. The method is
of order $p$, but in the limit $\epsilon\rightarrow 0$ it is
considered as of order $r\geq p$. Using this split allows to gain
a factor $\epsilon$ in the accuracy even for general splitting
methods where $r=p$. In \cite{mclachlan95cmi}, several methods of
order $(r,2)$ for $r\leq10$ are obtained with all coefficients
$a_i,b_i$ being positive and some other schemes of order $(r,4)$
for $r=6,8$ are {presented}. For near-integrable systems, these
last methods are the most efficient and stable ones. Despite the
gain of accuracy, the split into a dominant part and a small
perturbation is left unconsidered when it leads to involved or
computationally costly algorithms. This issue is addressed in this
work.

To take account for the time dependence in the vector fields in \eqref{GP1},
a more detailed analysis is required.
The general separable equation
\beq\label{nonaut_eq}
    iu'=A(u,t)+\epsilon B(u,t)
\eeq
can be solved by considering the time as two new independent coordinates
\beq\label{equationsAB2}
    \left\{ \begin{array}{rcl}
                iu'  & = & A(u,t_2) \\
                t_1' & = & 1
            \end{array} \right. ,
    \qquad
    \left\{ \begin{array}{rcl}
                iu'  & = & \epsilon B(u,t_1) \\
                t_2' & = & 1
            \end{array} \right. .
\eeq

This standard split is equivalent to a new system in an extended
phase space which is not near-integrable and the highly efficient
and stable methods designed for those problems lose their
excellent performance. Following \cite{blanes10sac}, the near
integrability is recovered if we introduce only one variable as
follows \beq\label{equationsAB3}
    \left\{ \begin{array}{rcl}
                iu'  & = & A(u,t_1) \\
                t_1' & = & 1
            \end{array} \right. ,       \qquad      iu'  =  \epsilon B(u,t_1) .
\eeq
This split requires to solve exactly the equation $iu'=A(u,t)$ for the
corresponding fractional time steps and to freeze the time when
solving $iu'=\epsilon B(u,t_1)$ (see Table~\ref{algorithm}).

\begin{table}[h!]
\caption{Algorithm for the numerical integration of the system
(\ref{equationsAB3}) by the composition (\ref{composition}).}
\label{algorithm} {
 \small
\begin{tabular}{c}
\begin{tabular}{l}
\hline
  {\bf Algorithm for one time step $t_n\rightarrow t_n+h$}
 \\
\hline
 $
\begin{array}{l}
  t^{[0]}=t_n \\
  {\bf do} \ \ i=1,m \\
   \quad  solve: \ u'=B(u,t^{[i-1]}), \quad \quad t\in[t^{[i-1]},t^{[i-1]}+b_ih] \\
   \quad  solve: \ u'=A(u,t), \qquad \qquad t\in[t^{[i-1]},t^{[i-1]}+a_ih] \\
   \quad  \qquad \ \quad t^{[i]}=t^{[i-1]}+a_ih \\
  {\bf enddo}  \\
  t_{n+1}=t^{[m]} \\
\end{array}  $
 \\
 \hline
\end{tabular}
\end{tabular}
}
\end{table}

We show that the exact solution of the non-autonomous problem, in
our setting the dominant part $H_0$ of \eqref{GP1}, \beq
\label{HO(t)}
     i \frac{\partial}{\partial t} \psi =
            \left(  \frac{1}{2\mu} p^2 +
                    \frac{1}{2}\mu\omega^2(t) x^2  \right) \psi
\eeq
is easily computed for a time step using Fourier
transforms. Before giving the details on the time integration,
some remarks on the formal solution are necessary.

It is well known that $H_0$ is an element of the Lie algebra spanned by the operators
$\{E=x^2/2, F=p^2/2, G=\frac12(px+xp)\}$, where $\mu=1$ for simplicity, and its commutators are
\[
    [E,F]=iG, \quad  [E,G]=2iE, \quad    [F,G]=-2iF.
\]
This is a three-dimensional Lie algebra and the solution,
$\psi(x,t)=U(t,0)\psi(x,0)$, of (\ref{HO(t)}) can be expressed as a
single exponential using the Magnus series expansion
\cite{magnus54ote,blanes09tme} 
or as a product of exponentials \cite{wei63las}. It is possible to formulate the
evolution operator $U(t,0)$ in many different ways, the most appropriate depending on
the particular purpose, e.g. using the Magnus expansion
\beq\label{HO(t)sol}
U(t,0) = \exp\left( f_1(t) E + f_2(t) F + f_3(t) G \right)
\eeq
for certain functions $f_i(t)$ \cite{remarkPhilipp1}.
Approximations of \eqref{HO(t)sol} for one time step, $h$, on the
other hand, are easily obtained, e.g. a fourth-order
commutator-free method is given by \cite{blanes00smf}
\begin{align}\label{CommutatorFree}
    U(t+h,t) & = \exp\left( \frac{h}{2}(\frac12p^2+\omega_L^2\frac12x^2)
                    \right) \\
             & \times \exp\left( \frac{h}{2}(\frac12p^2+\omega_R^2\frac12x^2)  \right) +
                    \mathcal{O}(h^5) \nonumber
\end{align}
where
\[
  \omega_L^2 = \alpha \omega_1^2 + \beta \omega_2^2, \qquad
  \omega_R^2 = \beta \omega_1^2 + \alpha \omega_2^2
\]
with $\omega_i=\omega(t_n+c_ih), \ c_1=\frac12-\frac{\sqrt{3}}{6},
\ c_2=\frac12+\frac{\sqrt{3}}{6}$, and
$\alpha=\frac12-\frac{1}{\sqrt{3}}, \ \beta=1-\alpha$. It can be
considered as the composition of the evolution for half time step
of two oscillators with averaged frequencies, using the
fourth-order Gauss-Legendre quadrature rule to evaluate
$\omega(t)$. Different quadrature rules can also be be used and
correspond to different averages along the time step, see
\cite{blanes00smf,blanes09tme}. In the limit when $\omega$ is
constant the exact solution is recovered. Higher order
approximations are available, if more accurate results are
desired, by approximating the functions $f_i$ in \eqref{HO(t)sol}
via truncated Magnus expansions.

Our objective is to obtain a factorization of the solution which only
involves terms proportional to $E$ or $F$ since they are easy to
compute as we show in the following paragraph.

Starting from \eqref{CommutatorFree} or high order approximates of \eqref{HO(t)sol},
the main result of this work is the natural decomposition for
the application of Fourier spectral methods.

\subsection{Hamiltonians for spectral methods}
We now analyze how to compute the evolution of different parts of
the Hamiltonian by spectral methods.
The spatial derivative (or kinetic part) associated to the
semidiscretized problem (\ref{GP1ODE}) can be solved in the
momentum space by noting that
\begin{equation}\label{GP1ODE_T}
 i \frac{d}{d t} u (t) = T  u(t)
 = \mathcal{F}_N^{-1} D_N \mathcal{F}_N \, u ,
\end{equation}
where $D_N$ is diagonal and $\mathcal{F}_N$ denotes the discrete
Fourier transform of length $N$, whose computation  can be
accomplished by the FFT algorithm with $\mathcal{O}(N \log N)$
floating point operations. The solution of (\ref{GP1ODE_T}) for
one time step, $h$, is given by
\[
 u(t+h) = \mathcal{F}_N^{-1} e^{-ihD_N} \mathcal{F}_N \, u(t)
\]
which requires two FFT calls. The exponentials in $e^{-ihD_N}$ need to
be computed {only} once and can be reused at each step such that the
cost of the action of $e^{-ihD_N}$ corresponds to $N$ complex products.

For the remaining part, the following well-known result is very
useful
\begin{lemma}\label{lemma1}
If $F$ is real valued, the equation
\beq  \label{NLS-pot}
    i \frac{\partial}{\partial t} \phi (x,t) = F(x,|\phi (x,t)|) \phi (x,t),
\eeq
leaves the norm invariant, $|\phi(x,t)|=|\phi(x,0)|$, and then
\begin{equation} \label{solNLS-pot}
  \phi(x,t) = e^{-itF(x,|\phi(x,0)|)} \phi(x,0).
\end{equation}
\end{lemma}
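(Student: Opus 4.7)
The plan is to establish the two claims of the lemma in sequence: first the norm invariance, and then use it to reduce the PDE to a pointwise ODE with time-independent coefficient, whose solution is the exponential formula.

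First I would verify $|\phi(x,t)| = |\phi(x,0)|$ by differentiating $|\phi|^2 = \phi\bar\phi$ in $t$. Taking the complex conjugate of \eqref{NLS-pot} and using that $F$ is real gives
\[
  -i\frac{\partial}{\partial t}\bar\phi = F(x,|\phi|)\,\bar\phi,
\]
so
\[
  \frac{\partial}{\partial t}|\phi|^2
  = \bar\phi\,\partial_t\phi + \phi\,\partial_t\bar\phi
  = -iF|\phi|^2 + iF|\phi|^2 = 0.
\]
Hence $|\phi(x,t)|$ is constant in $t$ for every fixed $x$, proving the invariance.

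Next I would exploit this fact: since $|\phi(x,t)|=|\phi(x,0)|$, the quantity $F(x,|\phi(x,t)|)=F(x,|\phi(x,0)|)$ no longer depends on $t$. Thus at each fixed $x$, equation \eqref{NLS-pot} becomes a scalar linear ODE
\[
  \frac{d}{dt}\phi(x,t) = -i\,F(x,|\phi(x,0)|)\,\phi(x,t)
\]
with constant coefficient in $t$, whose unique solution with initial datum $\phi(x,0)$ is precisely \eqref{solNLS-pot}.

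The argument is essentially routine, so I do not anticipate a genuine obstacle; the only subtle point is that the norm invariance must be established \emph{before} integrating, since a priori $F$ depends on the unknown through $|\phi(x,t)|$. One could alternatively start from the ansatz \eqref{solNLS-pot}, verify that it has constant modulus $|\phi(x,0)|$, and check it solves \eqref{NLS-pot}; uniqueness of solutions to the scalar ODE (at each $x$) then closes the argument. Either route yields the result with minimal calculation.
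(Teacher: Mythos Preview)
Your argument is correct and is the standard one: differentiate $|\phi|^2$ to get norm invariance, then observe the equation becomes a linear ODE in $t$ with frozen coefficient. The paper itself does not supply a proof of this lemma---it is stated as a ``well-known result'' and left unproved---so there is nothing to compare against; your write-up fills that gap cleanly.
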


%
\vspace*{.3cm}

On the other hand, it is well known that the solutions of the
linear Schr\"odinger equation with the harmonic potential 
\beq \label{Harmonic}
    i \frac{\partial}{\partial t} \phi (x,t) =  \frac12 (p^2 + x^2)  \phi (x,t)
\eeq can be expressed in terms of Hermite polynomials:
\beq\label{Hermite}
    \phi (x,t) = \sum_{n=0}^{\infty} c_n e^{-iE_n t} h_n(x)
\eeq
where
\beq\label{EigenHarm}
    E_n=n+\frac12, \qquad h_n(x)=\frac{1}{\pi^{1/4}\sqrt{2^nn!}} H_n(x) e^{-x^2/2}
\eeq
and $H_n(x)$ are the Hermite polynomials satisfying the recursion
\[
    H_{k+1}(x) = 2xH_k(x)-2kH_{k-1}, \qquad k=1,2,\ldots
\]
with $H_0(x)=1, \ H_1(x)=2x$. The weights $c_n$ can be computed
from the initial conditions, $c_n=\int h_n(x) \phi(x,0)\,dx$.

The previous results show how to compute individual parts of the equation
and thus permit different ways of splitting the system in two solvable parts
\beq\label{SepAB}
    i\psi_t = (A + B) \psi
\eeq
We consider the following cases:

{\bf (i) Fourier(F)-split}.

\beq \label{FourierSplit}
    A= \frac{1}{2}\,p^2 ,   \quad   B(t) = \frac{\omega(t)}{2}\,x^2 + \varepsilon V_I(x,t) + \sigma(t) |\psi |^2,
\eeq
 We take the time as a new coordinate, as in (\ref{equationsAB3}),
and evolve it with $A$, which is now autonomous and exactly
solvable, and freeze the time in $B$, which is then solved using
the result from Lemma~\ref{lemma1}. Here, $A$ and $B$ are
diagonal in the momentum and coordinate spaces, respectively, and
we can change between them using the Fourier Transforms.

{\bf (ii) Harmonic oscillator(HO)-split}.
Let for a moment $w=1$, the Hermite expansion then suggests a split
\beq \label{HermiteSplit-const}
    A= \frac1{2} (p^2 + x^2),   \qquad B(t) = \varepsilon  V_I(x,t) + \sigma(t) |\psi |^2,
\eeq
where the solution for the equation $iu'=Au$ can be approximated
using a finite number of Hermite basis functions, i.e.
\begin{equation}\tag{\ref{HermiteSplit-const}b}\label{HermiteM}
    \phi_M (x,t) = \sum_{n=0}^{M-1} c_n e^{-iE_n t} h_n(x).
\end{equation}
Since $B$ is diagonal in the coordinate space it will act as a simple multiplication
if we choose a number of Hermite basis functions and evaluate them at the points of a
chosen mesh (e.g. using the Gauss-Hermite quadrature \cite{lubich08fqt} or on equidistant grid points
\cite{perez03amc}). This split can be of interest if all
contributions from $B(t)$ are small with respect to $A$ and
methods for near-integrable systems are used.
If the frequency is time dependent, the corresponding split is
 \beq \label{HermiteSplit}
  A(t)= \frac1{2} (p^2 + \omega^2(t)x^2),  \qquad B(t) = \varepsilon V_I(x,t) + \sigma(t) |\psi |^2.
 \eeq
In this case, it is convenient to take the time, $t$ as a new
coordinate as shown in (\ref{equationsAB3}). The solution of
$iu'=A(t)u$, in the algorithm in Table~\ref{algorithm}, can be
approximated by the Magnus expansion, e.g. (\ref{HO(t)sol})
or (\ref{CommutatorFree}),
but these factorizations are not appropriate for use with spectral
methods since it would require two sets of basis functions and
additional transformations.

\vspace*{.3cm}

In general, the split {\bf (i)} can be considered faster and
simpler since $A\equiv T$ can be computed in the momentum space,
and one can easily and efficiently change from momentum to
coordinate space via FFTs. The choice {\bf (ii)}, on the other
hand, allows us to take advantage of the structure of a
near-integrable system if, roughly speaking, $\|B\|<\|A\|$, but it
requires to solve the equation for the (time-dependent) harmonic
potential exactly (or with high accuracy). The evolution of the
constant oscillator is easily computed using Hermite polynomials
(see \cite{perez03amc,thalhammer09jcp,lubich08fqt}). As mentioned,
methods tailored for this family of problems show a high
performance. The main drawback, however, is that the evolution for
$e^{-ih a_i A}$ has to be carried out using a basis of Hermite
polynomials whereas the $B$ part is advanced in space coordinates
\cite{lubich08fqt} resulting in possibly costly basis
transformations.

\subsection{Solving the Harmonic oscillator by Fourier methods}
We propose a new method which combines the advantages of both splittings.
It retains the advantages of the HO-split {\bf (ii)} while being as fast to compute
as the F-split in {\bf (i)}. For this purpose, we briefly review some basic concepts of Lie
algebras.

Given $X,Y$ two elements of a given Lie algebra it is well known that
\[
    e^{X} Y e^{-X} =    
                        Y + [X,Y] + \frac12 [X,[X,Y]]+ \ldots
\]
If $\{X_1,\ldots,X_k\}$ are the generators of a finite dimensional Lie algebra and
\[
    Z=\sum_{n=1}^k \alpha_n X_n, \qquad     G=\sum_{n=1}^k a_n X_n
\]
then
\beq\label{ExpAd}
    G(t) = e^{tZ} G e^{-tZ} = \sum_{n=1}^k a_n(t) X_n
\eeq
 where $G(t)$ satisfies the differential equation
\[
  G'(t) = [Z,G(t)], \qquad G(0)=G.
\]
Notice that if $G=x$ or $G=p$ and $Z=H(x,p)$ these equations are
equivalent to the equation of motion for the classical Hamiltonian
$H(x,p)$. It is also well-known that two operators are identical
on a sufficiently small time interval
if they satisfy the same first order differential equation with
the same initial conditions \cite{wilcox67jmp}.

Given $X(x),P(p),W=\frac12(p^2+x^2)$ and an analytic function $F(x,p)$,
we are interested in the following adjoint actions
\begin{subequations}
\begin{align}\label{Evol1}
    e^{-itX} F(x,p) e^{itX} & = F(x,p+tX')  \\
    e^{-itP} F(x,p) e^{itP} & = F(x-tP',p)  \\
    e^{-itW} F(x,p) e^{itW} & = F(\hat x,\hat p)
\end{align}
\end{subequations}
where $X'=dX/dx, \, P'=dP/dp$ and
\begin{align*}
    \hat x & =  \cos(t)x + \sin(t) p  \\
    \hat p & = -\sin(t)x + \cos(t) p.
\end{align*}
In classical mechanics, this corresponds to a kick, a drift and the harmonic oscillator rotation.

As we have seen, the exponentials
can be easily computed by Fourier spectral methods. It is then natural
to ask the question if it is possible to write the solution of
(\ref{HO(t)}) as a product of exponentials which are solvable by
spectral methods. The answer is positive and it is formulated as
follows.

Let us first consider the pure harmonic oscillator, whose result was
obtained in  \cite{chin05foa},
and for which we
present a new proof that applies equally to the general case.
\begin{lemma}\label{Lemma2}
Let $A_1=\frac12 p^2, \ B_1=\frac12 x^2$ and
\beq \label{f(h)-g(h)}
    g(t) = \sin\left(t\right) , \qquad \displaystyle f(t)=\tan\left(t/\,2\right).
\eeq
Then, the following property is satisfied for $|t|<\pi$:
\begin{align}
    e^{-it(A_1+B_1)}&= e^{-if(t)A_1}\ e^{-i g(t) B_1}\ e^{-if(t)A_1}   \label{ABA}   \\
                    &= e^{-if(t)B_1}\ e^{-i g(t) A_1}\ e^{-if(t)B_1}   \label{BAB}
\end{align}
\end{lemma}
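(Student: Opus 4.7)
The plan is to reduce the operator identity \eqref{ABA} to a $2\times2$ matrix identity in $\mathrm{Sp}(2,\R)$ by exploiting the adjoint action on the canonical pair $(x,p)$. Since both sides of \eqref{ABA} are exponentials of quadratic generators, their conjugation actions on $\{x,p\}$ are linear, and equality of those linear actions will determine the operators up to a central phase to be fixed by continuity.

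First I would read off, from the adjoint formulas just established, the $2\times2$ symplectic matrices assigned to each factor: $e^{-itA_1}$ to $\bigl(\begin{smallmatrix}1 & -t\\ 0 & 1\end{smallmatrix}\bigr)$, $e^{-itB_1}$ to $\bigl(\begin{smallmatrix}1 & 0\\ t & 1\end{smallmatrix}\bigr)$, and $e^{-it(A_1+B_1)}$ to the planar rotation $R(t)$. Because $\mathrm{Ad}$ is a homomorphism on operators, the right-hand side of \eqref{ABA} corresponds to the matrix product
\[
\begin{pmatrix}1 & -f\\ 0 & 1\end{pmatrix}\begin{pmatrix}1 & 0\\ g & 1\end{pmatrix}\begin{pmatrix}1 & -f\\ 0 & 1\end{pmatrix}=\begin{pmatrix}1-fg & -2f+f^{2}g\\ g & 1-fg\end{pmatrix}.
\]
Matching this to $R(t)$ yields the two independent scalar equations $1-fg=\cos t$ and $g=\sin t$, whose unique solution is $g(t)=\sin t$ and $f(t)=(1-\cos t)/\sin t=\tan(t/2)$; the remaining off-diagonal entry is then automatically consistent because both matrices have determinant $1$. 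The hypothesis $|t|<\pi$ is precisely the range on which $\tan(t/2)$ remains smooth and the rotation has not wrapped around.

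Finally I would promote the matrix identity to the operator identity. The map from unitary exponentials of quadratic Hamiltonians to $\mathrm{Sp}(2,\R)$ is a double cover, so equality of adjoint actions pins down the operator only up to a central sign; I would fix that sign by noting that both sides of \eqref{ABA} equal the identity at $t=0$ and depend continuously on $t$ on the simply connected interval $(-\pi,\pi)$, so no discrete jump can occur. A self-contained alternative, closer in spirit to the uniqueness principle invoked earlier in the text, is to denote the right-hand side by $\Psi(t)$, differentiate it using the adjoint formulas to push the inner generators past the exponentials, and check that $\dot\Psi=-i(A_{1}+B_{1})\Psi$ holds exactly when $f=\tan(t/2)$ and $g=\sin t$; this reduces to three scalar trigonometric identities among $f,g,f',g'$ arising from the coefficients of $A_{1}$, $B_{1}$, and the third Lie-algebra generator $G=\tfrac{1}{2}(xp+px)$ (the coefficient of $G$ must vanish, and those of $A_{1},B_{1}$ must each equal $1$). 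The companion identity \eqref{BAB} then follows either by rerunning the argument with $A_{1}$ and $B_{1}$ interchanged, or by conjugating \eqref{ABA} by the Fourier transform, which swaps $A_{1}\leftrightarrow B_{1}$ while fixing their sum. I expect the only real subtlety to be the bookkeeping of the central phase; the symplectic-matrix computation is short and the trigonometric identities are elementary.
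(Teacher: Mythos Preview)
Your proposal is correct and follows essentially the same route as the paper: both reduce the operator identity to a $2\times 2$ matrix identity in the classical $\mathfrak{sl}(2,\R)$ picture, match the product of shears against the rotation to read off $g(t)=\sin t$ and $f(t)=\tan(t/2)$, and then lift back to the operator level. The paper justifies the lift by noting that the Lie algebras coincide and invoking the Baker--Campbell--Hausdorff formula, whereas you invoke the metaplectic double cover and fix the central sign by continuity from $t=0$; your alternative of verifying the common first-order ODE is exactly the paper's opening remark, so the arguments are equivalent in content.
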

\begin{proof}
It suffices to impose that the right hand side of \eqref{ABA} (or
\eqref{BAB}) satisfies the equation (\ref{Harmonic}) with the
identity as initial condition what can be verified by differentiating
and simple applications of the rules (\ref{Evol1},b).

A more constructive way to derive the functions $f,g$ makes use
of the parallelism with the one-dimensional classical harmonic oscillator
with Hamiltonian $H= \frac12 p^2+\frac12 q^2$ and Hamilton equations
\begin{equation}  \label{split_AB}
 \frac{d}{dt} \left\{
 \begin{array}{c}
 q \\
 p  \end{array} \right\} =  \left(
 \begin{array}{ccc}
  0   & \, & 1  \\
  -1 & \, &  0 \end{array} \right)   \left\{
 \begin{array}{c}
  q  \\
  p  \end{array} \right\} = ( A+ B) \left\{
 \begin{array}{c}
  q \\
  p \end{array} \right\}
\end{equation}
where
\begin{equation}    \label{eq.4a}
 A \equiv \left(  \begin{array}{ccc}
               0   & \, &  1  \\
              0   & \, &  0  \end{array} \right),
              \qquad\qquad
  B \equiv \left(  \begin{array}{ccc}
               0   & \, & 0  \\
              -1    & \, & 0  \end{array} \right).
\end{equation}
The Lie algebra generated by the matrices $A,B$ is the same as the
Lie algebra associated to the operators $A_1,B_1$ for the
Schr\"odinger equation with the harmonic potential \eqref{Harmonic}.

The exact evolution operator of (\ref{split_AB})  is
\begin{equation}  \label{exact-Sch}
   O(t) = \left(
 \begin{array}{rcr}
   \cos(t)   &  & \sin(t)  \\
  -\sin(t)  &   & \cos(t)  \end{array}
  \right),
\end{equation}
which is an orthogonal and symplectic $2 \times 2$ matrix.
For the splitted parts, the solutions are easily computed to
\begin{equation} \nonumber\label{eA_eB}
  e^{f(t)A} = \left(
 \begin{array}{ccc}
 1   & \,  & f(t)  \\
  0  & \,  &  1   \end{array} \right), \qquad
   e^{g(t)B} = \left(
 \begin{array}{ccc}
   1   & \, & 0 \\
  -g(t)  &  \, &1  \end{array} \right)
\end{equation}
and then, equating the symmetric composition
\begin{equation} \nonumber\label{efA_egB_efA}
 e^{fA} e^{gB}   e^{fA}  = \left(
 \begin{array}{ccc}
 1 - f\cdot g   & \,  & 2f- f^2\cdot g  \\
  -g  & \,  &  1 - f\cdot g  \end{array} \right),
\end{equation}
to (\ref{exact-Sch}), we obtain (\ref{f(h)-g(h)}) which is valid for
$|h|\leq \pi$. 
The decomposition \eqref{BAB} is derived analogously.
Using the Baker-Campbell-Haussdorf-formula, it is clear, that
both results remain valid, up to the first singularity at $t=\pm\pi$,
when replacing the matrices $A,B$ by the corresponding
linear operators $A_1,B_1$, since all computations are done in
identical Lie algebras.
\end{proof}

\vspace*{.3cm}

\vspace*{.3cm}

It is immediate to generalize this result to the equation
\beq  \nonumber\label{SchrHarmOsc2}
 i \frac{\partial}{\partial t} \psi (x,t) = \left(
 \frac{1}{2\mu} p^2 +
     \mu\frac{\omega^2}{2} x^2 \right) \psi (x,t),
\eeq
$\mu,\omega>0$ {by replacing} (\ref{f(h)-g(h)}) {with}
\[
 g = \frac{1}{\mu \omega}\sin(\omega t), \qquad
 f = \mu \omega\tan\left(\frac{\omega}{2}t\right).
\]
This result is valid for $|t|< t^*\equiv \pi/\omega$.

The following theorems extend this idea to decompositions of
operators that appear after the approximation of the time
dependent parts via \eqref{HO(t)sol} or by the composition \eqref{CommutatorFree}.

\begin{theorem}\label{theorem1}
Let $\alpha,\beta ,\gamma$ be constants, $\eta=\sqrt{\alpha\gamma-\beta^2}$ and
\begin{align}\label{sol:theorem1}
    g(t) & = \gamma/\eta \cdot \sin(\eta t) ,  \\
    f(t) & = \frac{1}{g(t)}\left(1-\cos(\eta t)+\frac{\beta}{\eta}\sin(\eta t)\right), \nonumber  \\
    e(t) & = \frac{1}{g(t)}\left(1-\cos(\eta t)-\frac{\beta}{\eta}\sin(\eta  t)\right)
    .\nonumber
\end{align}
Then, the following decomposition holds for $0\leq t < \pi/\eta$:
\begin{align}
  &\, e^{-i\frac{t}{2}\left(\alpha x^2+\beta (xp+px)+\gamma p^2\right)} \nonumber \\
 =&\, e^{-if(t)\frac{1}{2}x^2}\ e^{-i g(t) \frac{1}{2}p^2}\ e^{-ie(t)\frac{1}{2}x^2}.
   \label{BAB-th1}
\end{align}
\end{theorem}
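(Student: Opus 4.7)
The strategy extends the constructive approach of Lemma~\ref{Lemma2}: the operators $\tfrac12 x^2$, $\tfrac12 p^2$, and $\tfrac12(xp+px)$ span a three-dimensional Lie algebra isomorphic to $\mathfrak{sl}(2,\mathbb{R})$, via the classical-mechanical correspondence $-iH\leftrightarrow M_H$ in which $M_H$ is the matrix generating the classical flow of the quadratic Hamiltonian $H$. The operator identity \eqref{BAB-th1} is thus equivalent to a matrix identity in $SL(2,\mathbb{R})$ which can be verified by direct computation and then lifted by a Baker--Campbell--Hausdorff argument.

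First I would fix the $2\times2$ representation used in Lemma~\ref{Lemma2} and extend it by the third generator: associate to $\tfrac12 p^2$, $\tfrac12 x^2$, $\tfrac12(xp+px)$ the matrices
\[
 A = \begin{pmatrix} 0 & 1 \\ 0 & 0 \end{pmatrix}, \quad
 B = \begin{pmatrix} 0 & 0 \\ -1 & 0 \end{pmatrix}, \quad
 C = \begin{pmatrix} 1 & 0 \\ 0 & -1 \end{pmatrix}.
\]
A direct check of $[A,B]=-C$, $[B,C]=2B$, $[C,A]=2A$ confirms that the structure constants agree with those of the three operators. The LHS of \eqref{BAB-th1} then corresponds to $e^{tM}$ with $M = \alpha B + \beta C + \gamma A = \begin{pmatrix}\beta & \gamma \\ -\alpha & -\beta\end{pmatrix}$. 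Because $\operatorname{tr} M = 0$ and $\det M = \eta^2$, Cayley--Hamilton yields $M^2 = -\eta^2 I$ and hence $e^{tM} = \cos(\eta t)\,I + \frac{\sin(\eta t)}{\eta}\,M$.

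In the matrix picture the RHS of \eqref{BAB-th1} is the product $e^{fB}\,e^{gA}\,e^{eB}$ of three triangular unipotent factors, which multiplies out to
\[
 \begin{pmatrix} 1-eg & g \\ -f-e(1-fg) & 1-fg \end{pmatrix}.
\]
Equating this against $e^{tM}$ gives $g(t)=\gamma\sin(\eta t)/\eta$ from the $(1,2)$ entry, and the $(1,1)$ and $(2,2)$ entries then yield the stated expressions for $e(t)$ and $f(t)$; the $(2,1)$ entry is automatically consistent because both sides have determinant~$1$. The restriction $0\le t<\pi/\eta$ is dictated by the first zero of $g$, at which $e$ and $f$ diverge. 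Transferring the identity back to the operator level follows as in the last paragraph of the proof of Lemma~\ref{Lemma2}: the Baker--Campbell--Hausdorff expansion of each side depends only on the structure constants of the abstract Lie algebra, which are identical in the matrix and operator representations, so the matrix identity forces the operator identity on any interval of analyticity, in particular on $[0,\pi/\eta)$.

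The main obstacle is bookkeeping: one must check that the dictionary $-iH\leftrightarrow M_H$ (not the naive $H\leftrightarrow M_H$) makes the two Lie algebras genuinely isomorphic, because the operator commutators differ from the matrix commutators by a factor of~$i$. Once this is set up, the theorem reduces to the same $2\times2$ calculation used in Lemma~\ref{Lemma2}.
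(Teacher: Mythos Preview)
Your proof is correct and follows essentially the same route as the paper: reduce to the classical $2\times2$ problem via the Lie-algebra isomorphism, compute the evolution matrix (the paper writes it out directly, you obtain it via Cayley--Hamilton from $M^2=-\eta^2 I$), match entries of the $BAB$ product to read off $g,f,e$, and lift back to operators by the BCH argument from Lemma~\ref{Lemma2}. The only cosmetic difference is that the paper additionally remarks that one may verify \eqref{BAB-th1} by checking both sides satisfy the same differential equation via the adjoint rules~(\ref{Evol1}a,b), whereas you rely solely on the BCH transfer; either justification suffices.
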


\begin{proof}
 The proof follows the lines of the proof of Lemma \ref{Lemma2}. The evolution operator
 associated to the classical Hamiltonian $H=\frac{1}{2}\left(\alpha x^2+2\beta xp+\gamma p^2\right)$
 is given by
\beq\label{}
\left( \begin{array}{cc}
   \cos(\eta t)+\frac{\beta}{\eta}\sin(\eta t) &  \frac{\gamma}{\eta}\sin(\eta t)\\
   -  \frac{\alpha}{\eta}\sin(\eta t) & \cos(\eta t)-\frac{\beta}{\eta}\sin(\eta t)
 \end{array} \right).
\end{equation}
and equality to the right hand side of \eqref{BAB-th1}
is verified by straight-forward computation of the matrix exponentials.
The solution is valid until the first singularity at $t=\pi/\eta$.
Using (\ref{Evol1},b), it can be checked that both sides
of \eqref{BAB-th1} satisfy the same differential equation and initial conditions.
\end{proof}

\begin{theorem}\label{theorem2}
Let $\omega_k\in\mathbb{R}$, $c_k=\cos(\omega_kt/2), \
s_k=\sin(\omega_kt/2)$ for $k=L,R$ and
\begin{align}
        \label{sol:theorem2}
    g(t) & = s_Lc_R/\omega_L+c_Ls_R/\omega_R,  \\
        \nonumber
    f(t) & = \frac{1}{g(t)}\left(1-c_Lc_R+\frac{\omega_L}{\omega_R}s_Ls_R\right), \\
        \nonumber
    e(t) & = \frac{1}{g(t)}\left(1-c_Lc_R+\frac{\omega_R}{\omega_L}s_Ls_R\right).
\end{align}
Then, the following decomposition
\begin{align}
    \nonumber
     &\, e^{-i\frac{t}{2}(\frac12 p^2+\omega_L^2 \frac12 x^2)}
        e^{-i\frac{t}{2}(\frac12 p^2+\omega_R^2 \frac12 x^2)}  \\
    =&\, e^{-if(t)\frac12 x^2}\ e^{-i g(t) \frac12 p^2}\
        e^{-ie(t)\frac12 x^2}
\end{align}
is satisfied for $0\leq t < t^*$, where $t^*$ is the smallest positive root of~$g(t)$.
\end{theorem}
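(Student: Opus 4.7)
The plan is to adapt the Lie-algebra parallelism used in Lemma~\ref{Lemma2} and Theorem~\ref{theorem1}. The operators $\frac12 x^2$, $\frac12 p^2$, $\frac12(xp+px)$ generate a three-dimensional Lie algebra isomorphic to the matrix algebra introduced in (\ref{eq.4a}), so each side of the claimed identity corresponds under this isomorphism to a product of $2\times 2$ symplectic matrices on classical phase space. Once the matrix identity is verified, the operator identity follows up to the first singularity of the decomposition, exactly as in the closing step of the proof of Lemma~\ref{Lemma2}.

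First I would write down the classical matrix associated with each factor on the left-hand side. The operator $e^{-i(t/2)(\frac12 p^2+\omega_k^2\frac12 x^2)}$ corresponds to the flow of the classical harmonic oscillator of frequency $\omega_k$ for time $t/2$, and hence to
\[
   M_k=\begin{pmatrix} c_k & s_k/\omega_k\\ -\omega_k s_k & c_k\end{pmatrix},\qquad k=L,R,
\]
with $c_k$, $s_k$ as in the statement. Multiplication gives a closed form for the entries of $M_LM_R$.

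Next, I would write down the three shear matrices associated with the right-hand side,
\[
   \begin{pmatrix}1 & 0\\ -f & 1\end{pmatrix},\quad \begin{pmatrix}1 & g\\ 0 & 1\end{pmatrix},\quad \begin{pmatrix}1 & 0\\ -e & 1\end{pmatrix},
\]
and multiply them out. Equating the $(1,2)$ entries immediately yields the formula for $g(t)$; equating the $(1,1)$ and $(2,2)$ entries then solves uniquely for $e(t)$ and $f(t)$, respectively. The remaining $(2,1)$ equation is automatically consistent because both products are symplectic, so only three of the four scalar conditions are independent. Invoking the Lie-algebra isomorphism, as at the end of the proof of Lemma~\ref{Lemma2}, transfers the matrix identity back to the operator setting.

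The main obstacle is controlling the range of validity. Since both $f$ and $e$ involve division by $g(t)$, the factorization breaks down at any zero of $g$, and the smallest positive root $t^*$ is the sharp upper limit, exactly as stated in the theorem. Geometrically this reflects that at $t=t^*$ the matrix $M_LM_R$ sends the momentum axis into itself, and no kick--drift--kick product of the prescribed form can realize such a map. As an independent check, the result can also be obtained as a corollary of Theorem~\ref{theorem1} by re-expressing $M_LM_R$ as a single exponential of $\alpha x^2+\beta(xp+px)+\gamma p^2$ for suitable $\alpha,\beta,\gamma,\tau$ and then applying the $BAB$-decomposition of Theorem~\ref{theorem1}.
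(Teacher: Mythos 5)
Your proposal is correct and follows essentially the same route as the paper: the paper proves Theorem~\ref{theorem2} by noting it is "similar to the previous one," i.e.\ the classical $2\times 2$ symplectic-matrix computation of Lemma~\ref{Lemma2} and Theorem~\ref{theorem1}, equating the kick--drift--kick product to $M_LM_R$ entrywise and transferring the identity to the operator level via the Lie-algebra isomorphism, valid up to the first zero of $g$. Your entry-by-entry identification (the $(1,2)$ entry giving $g$, the $(1,1)$ and $(2,2)$ entries giving $e$ and $f$, and symplecticity handling the $(2,1)$ entry) reproduces exactly the intended argument.
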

The proof is similar to the previous one.

\section{The Hermite-Fourier methods}
With the presented exact decompositions at hand,
we now solve the discretized GPE (\ref{GP1ODE}) by splitting
methods using the symmetric compositions (\ref{LeapFrogA}) and
(\ref{LeapFrogB}). Let us first consider the case $w=1$ and take the
HO split $A=A_1+B_1$
\begin{align}
    \label{LeapFrogA1}
        \psi_{h,A}^{[2]} & =   e^{-ih (A_1+B_1)/2} \, e^{-ih B} \,  e^{-ih (A_1+B_1)/2}
\\  \label{LeapFrogB1}
        \psi_{h,B}^{[2]} & =  e^{-ih B/2} \, e^{-ih (A_1+B_1)} \, e^{-ih B/2}.
\end{align}
Replacing the exponentials $e^{-ih(A_1+B_1)}$ by \eqref{ABA} or \eqref{BAB},
we obtain four different methods whose computational costs differ considerably.
At first sight, using the FSAL property,
both \eqref{LeapFrogA1} and \eqref{LeapFrogB1} are equivalent from
the computational point of view and require one exponential of $B$
and another one of $A_1+B_1$ per step.
However, a significant difference arises when we plug in the decompositions
\eqref{ABA} or \eqref{BAB}. Only the combination \eqref{LeapFrogB1} with \eqref{BAB}
yields a method that involves only a single FFT call per step
\begin{align}
  \psi_h^{[2]}
   & =   e^{-ih B/2} \, e^{-ih (A_1+B_1)} \, e^{-ih B/2}
   \nonumber \\
   & =   e^{-ih B/2} \,  e^{-i f(h) B_1} \, e^{-ig(h) A_1}
     \, e^{-if(h) B_1} \, e^{-ih B/2}
   \nonumber \\
   & =   e^{-i(h B/2 + f(h) B_1)} \, e^{-ig(h) A_1} \, e^{-i(h B/2+f(h) B_1)}
   \label{leapfrogHF}
\end{align}
and solves exactly the harmonic oscillator for $|h|< h^*$. For any
other combination, more kinetic terms have to be computed per step
since the FSAL property cannot be exploited to full extent and
hence result in more costly \cite{remark3} methods for the same
accuracy.

The general composition (\ref{prod-gen}) can be
rewritten in the same way by replacing each flow $e^{-ia_i A}$ in
(\ref{prod-gen}) by the composition (\ref{BAB}) 
\begin{align}  \label{prod-genHarmOsc} \nonumber
  \Phi_h  \equiv &\;   e^{-i(h b_m B+\alpha_mB_1)} \, e^{-ig(a_mh) A_1} \,
                           e^{-i(h b_m B+\alpha_{m-1}B_1)}\\
                 &\,\cdots e^{-i(h b_1 B+\alpha_1B_1)}%
                        \, e^{-ig(a_1h) A_1}\,e^{-i\alpha_0B_1}
\end{align}
where $\alpha_k=f(a_{k+1}h)+f(a_kh), \ k=0,1,\ldots,m+1$ with
$a_0=a_{m+1}=0$. This method is valid for $|a_i h|< h^*, \ \
i=1,\ldots,m$ and requires only $m$ calls of the FFT and its
inverse, but reaches the same accuracy as if the Hermite functions
were used.

In the more general case with a time-dependent frequency, $\omega(t)$,
starting from the HO splitting, the time-dependent part is first approximated by Magnus expansions \eqref{HO(t)sol} or \eqref{CommutatorFree}.
Theorems \ref{theorem1} and \ref{theorem2} then provide decompositions
to write the product of exponentials in a similar way as in \eqref{prod-genHarmOsc}
but now $\alpha_k=f(a_{k+1}h)+e(a_kh)$ and it is valid for $|a_i h|< t^*, \ \ i=1,\ldots,m$ where $t^*$
is the first zero of $g(t)$. At each stage, one has to compute
$f(a_{i}h),g(a_ih),e(a_ih)$ from $\omega(t)$.

As in the previous case, it only requires $m$ calls of the FFT and
its inverse, like the standard Fourier pseudospectral methods. For
stability reasons, it seems convenient to look for splitting
method whose value of $\max_i\{|a_i| \}$ is as small as possible.

            \section{Numerical examples}

We analyze first the performance of the methods considered in this
work for the one-dimensional problem (\ref{GP1}) with
$\mu=\omega^2=1$, and the pure harmonic trap, i.e. $V_I=0$.

To illustrate the validity of the decomposition presented in
Lemma~\ref{Lemma2}, we first consider the linear problem
($\sigma=0$). We take the ground state at $t=0$ as initial condition
whose exact solution is given by
\[
 \psi(x,t)=\frac{1}{\pi^{1/4}\sqrt{2^nn!}} e^{-it/2} e^{-x^2/2}.
\]
We discretize on the interval $[-10,10]$, to ensure the wave
function and its first derivatives vanish up to round off at the boundaries, and
sample it at $N=1024$ equidistant grid points. We integrate, with only
one time step, from
$t=0$ to $T$ for
$T\in[-\pi,\pi]$,
i.e. forward and backward in time.
We measure the {integrated} error in the wave function,
$\|u_{ex}(T)-u_{a}(T)\|_2$, where $u_{a}(T)$ denotes the
approximate numerical solution obtained using the split
(\ref{BAB}) and $u_{ex}(T)$ is the exact solution at the
discretized mesh. {The result of this comparison is illustrated in
Fig.~\ref{fig1} (left).}  The split (\ref{BAB})
{reproduces}, for $|T|<\pi$, the exact solution up to round off,
as expected.
The right panel in Fig.~\ref{fig1} displays a zoom near a singularity
where the error grows {rapidly due to double precision arithmetic.}

\begin{figure}[ht]%
\begin{center}%
    \hspace*{-0.13cm}
    \psfrag{pisymbol}[][c]{$\pi$}
    \psfrag{mpisymbol}[][c]{$-\pi$}

    \includegraphics[width=\figsize]{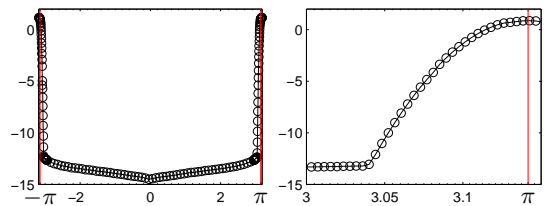}
    \caption{\label{fig1} Error in logarithmic scale for the integration
    of the ground state of the Harmonic potential using the split (\ref{BAB})
    for $T\in[-\pi,\pi]$ (integration forward and backward in
    time). The left and right panels show the 2-norm error
    and a zoom about $T=\pi$, respectively.
    }
\end{center}
\end{figure}

We analyze how the approximation properties of the Hermite
decomposition \eqref{HermiteM} strongly depend on the function in
question and on the chosen number of basis functions, $M$.
We compute the $M$ required to reach round-off precision for the evolution
of a displaced ground state as initial condition, $\psi_{\delta}(x,0)=
e^{-(x-\delta)^2/2}/(\pi^{1/4}\sqrt{2^nn!})$ from $t=0$ to $T=10$ in one time step.
From initial conditions computed on a mesh, this can be accomplished as
follows \cite{perez03amc}:
\beq\label{Hermite2}
    u_{ex}(T) = e^{-iT(A_1+B_1)} u_0 \sim K^T e^{-iTD_1} K \, u_0
\eeq
where $D_1=diag\{\frac12,\frac32,\ldots,\frac{2M-1}{2}\}$, $M$ is
the number of basis elements considered, and
$K_{i,j}=h_{i-1}(x_j), \ i=1,\ldots,M, \ j=1,\ldots,N=512$ with
$h_{n}(x)$ given in (\ref{Hermite}), $x\in[-10,10]$.
For $\delta=\frac{1}{10}$, round off accuracy is
achieved with $M=8$ while for $\delta=2$ it is necessary to take
$M=29$.  We {observe} that the Hermite decomposition is very
sensitive to the initial conditions \cite{lubich08fqt}.
The Hermite basis works efficiently as far as the initial conditions
as well as {the evolution thereof} can be accurately approximated
using a few number of basis {elements} and one has to keep in mind that,
for nonlinear problems, the number of basis functions necessary to
reach a given accuracy can vary along the time integration.

Next, we study the following values for the nonlinearity
parameter: $\sigma=10^{-2}, 1, 10^{2}$. The case $\sigma=10^{-2}$
illustrates the performance of the new methods if applied to
problems (linear or nonlinear) which are small perturbations of
the Harmonic potential whereas the values $\sigma=1,10^2$ are
large enough to demonstrate the nonlinearity effects on the
approximation properties of the methods. Physically, $\sigma$ is
proportional to the number of particles in a Bose-Einstein
condensate and to the interaction strength \cite{pet01}.

For all cases, we choose the initial condition $\psi(x,0)=\rho e^{-(x-1)^2/2}$, with $\rho$ a
normalizing constant. We show in Fig.~\ref{fig2} the value of
$|\psi(x,t)|^2$ at the initial and final time.
The spatial interval is adjusted to ensure the wave function vanishes (up to round off) at
the boundaries, here $[-20,20]$ for $\sigma=0.01$ and
$[-30,30]$ for both $\sigma=1$ and $\sigma=100$ where the wave
function moves faster (we only show the interval $x\in[-5,5]$).
One can appreciate that for strong
nonlinearities the wave function can considerably penetrate the
potential barrier and one expects that an accurate
approximation of these wave functions requires a large number of
Hermite functions when using (\ref{Hermite2}) and hence renders this
procedure inappropriate.

\begin{figure}[ht]%
\begin{center}%
    \hspace*{-0.3cm}\includegraphics[width=.52\textwidth]{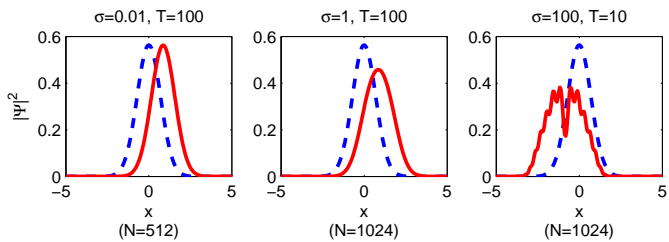}
    \caption{\label{fig2}Exact evolution at $t=T$ (solid line) from the initial
    conditions given by $\psi(x,0)=\rho e^{-(x-1)^2/2}$ (dashed line).
    The number of grid points is given by $N$.}
\end{center}
\end{figure}

\subsection{HO-split versus F-split}

We analyze now the advantages of the HO-split versus
the F-split as given in (\ref{HermiteSplit}) and (\ref{FourierSplit}).

In a first experiment, we fix the symmetric second
order $BAB$ composition (\ref{LeapFrogB1}) and apply it for both splits.
For the HO-split, we compute the harmonic part either
with the decomposition (\ref{BAB}) or in the Hermite basis (\ref{Hermite2})
with different numbers of basis terms.

The parameters, initial conditions and final times are taken as previously for Fig.~\ref{fig2}.
Given a splitting method $X$, we denote by $X_F, X_H$ and $X_HM$ its
implementations with the F-split, the HO-split using the
Hermite-Fourier method and the HO-split using $M$ Hermite basis
functions in (\ref{Hermite2}), respectively.
We measure the error versus the number of exponentials
which can be considered proportional to the computational cost
and plot the results in Fig.~\ref{fig3}.
As expected, HO-splits are advantagous if the system is close to a harmonic oscillator,
i.e. for $\sigma=0.01,1$, and if the initial conditions are accurately approximated by a
few terms in the Hermite expansion.
On the other hand, for strong nonlinearities $\sigma=100$,
the Hermite polynomial based HO split shows a very limited performance, c.f.
the large number of basis terms in the right panel of Fig.~\ref{fig3}.
We stress that if this technique is used for nonlinearities
the number of basis terms should be increased along the time integration and
fixing it bounds the maximally achievable accuracy and its limit
depends on the initial condition and the strength of the nonlinearity.

The Hermite-Fourier method proposed in this work
(using the composition (\ref{BAB})) is clearly superior for weak
perturbations and it keeps similar performance to the F-split for
strong nonlinearities.

\begin{figure*}[ht]
\begin{center}
    \hspace*{0cm}
    \includegraphics[width=\textwidth]{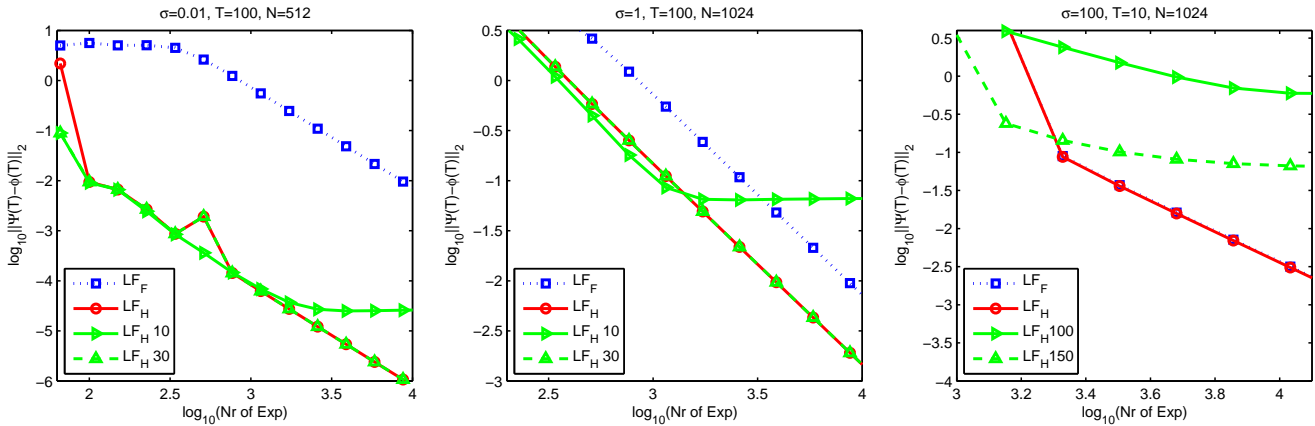}
    \caption{\label{fig3} Error versus the number of exponentials
    in logarithmic scale for different splittings for the
    Leap-Frog method.}
\end{center}
\end{figure*}

Finally, we analyze the performance of different higher order
splitting methods which are useful when high accuracies are
desired. The following methods (whose coefficients are collected
in Table~\ref{tab.1} for the convenience of the reader) are considered: \\
-- $RKN_64$ (the 6-stage fourth-order method from
\cite{blanes02psp}). This is a Partitioned Runge-Kutta-Nystr\"om
method and it is {designed for} the case where $[B,[B,[B,A]]]=0$,
being the case for both the F-split and the HO-split.\\
-- $NI_{4}(8,2)$ (the 4-stage (8,2) $BAB$ method from
\cite{mclachlan95cmi}). This method is addressed to perturbed
systems. One expects a high performance if the contribution from $B$ is small.\\
-- $NI_{5}(8,4)$ (the 5-stage (8,4) $BAB$ method from
\cite{mclachlan95cmi}).

\begin{table}[!ht]
\centering
\caption{Coefficients for several splitting methods.}
\label{tab.1}
 {
\begin{tabular}{ll}
\hline\hline
\multicolumn{2}{c}{The 6-stage 4$th$-order: SRKN$_{6}4$ } \\
\hline\hline
$ b_1= 0.0829844064174052 $ & $a_1=  0.245298957184271$\\
$ b_2= 0.396309801498368  $ & $a_2=  0.604872665711080$\\
$ b_3=-0.0390563049223486 $ & $a_3=  1/2 - (a_1+a_2)$ \\
$ b_4= 1 - 2(b_1+b_2+b_3)$ & $a_4=a_3,\ a_5=a_2, \ a_6=a_1$ \\
$ b_5=b_3,\ b_6=b_2, \ b_7=b_1$
\\
\hline\hline
\multicolumn{2}{c}{The 4-stage (8,2) method: NI$_{4}$ } \\
\hline\hline
$ b_1= 1/20 $   & $a_1= 1/2 - \sqrt{3/28} $\\
$ b_2= 49/18 $  & $a_2= 1/2 - a_1 $\\
$ b_3=  1-2(b_1+b_2)$ & $a_3=a_2, \ a_4=a_1$ \\
$ b_4=b_2, \ b_5=b_1$
\\
\hline\hline
\multicolumn{2}{c}{The 5-stage (8,4) method: NI$_{5}$ } \\
\hline\hline
$ b_1= 0.811862738544516
                        $ & $a_1= -0.00758691311877447
                                                    $\\
$ b_2= -0.677480399532169
                        $ & $a_2= 0.317218277973169
                                                    $\\
$ b_3=  1/2 - (b_1+b_2)$ & $a_3=  1 - 2(a_1+a_2)$ \\
$ b_4=b_3,\ b_5=b_2, \ b_6=b_1 $ & $a_4=a_2,\ a_5=a_1$
\\ \hline\hline
\end{tabular}
}
\end{table}

We analyze in Figures~\ref{fig4}-\ref{fig6} the three problems
specified in Fig.~\ref{fig3}. In the upper panels, the Leap-Frog
methods, LF, are compared with the second order $NI_{4}(8,2)$
methods. In the lower panels we compare the $RKN_64$ methods
against the (8,4) methods jointly with the best among the previous
second order methods.

\begin{figure}[!ht]
\begin{center}%
    \includegraphics[width=\figsize]{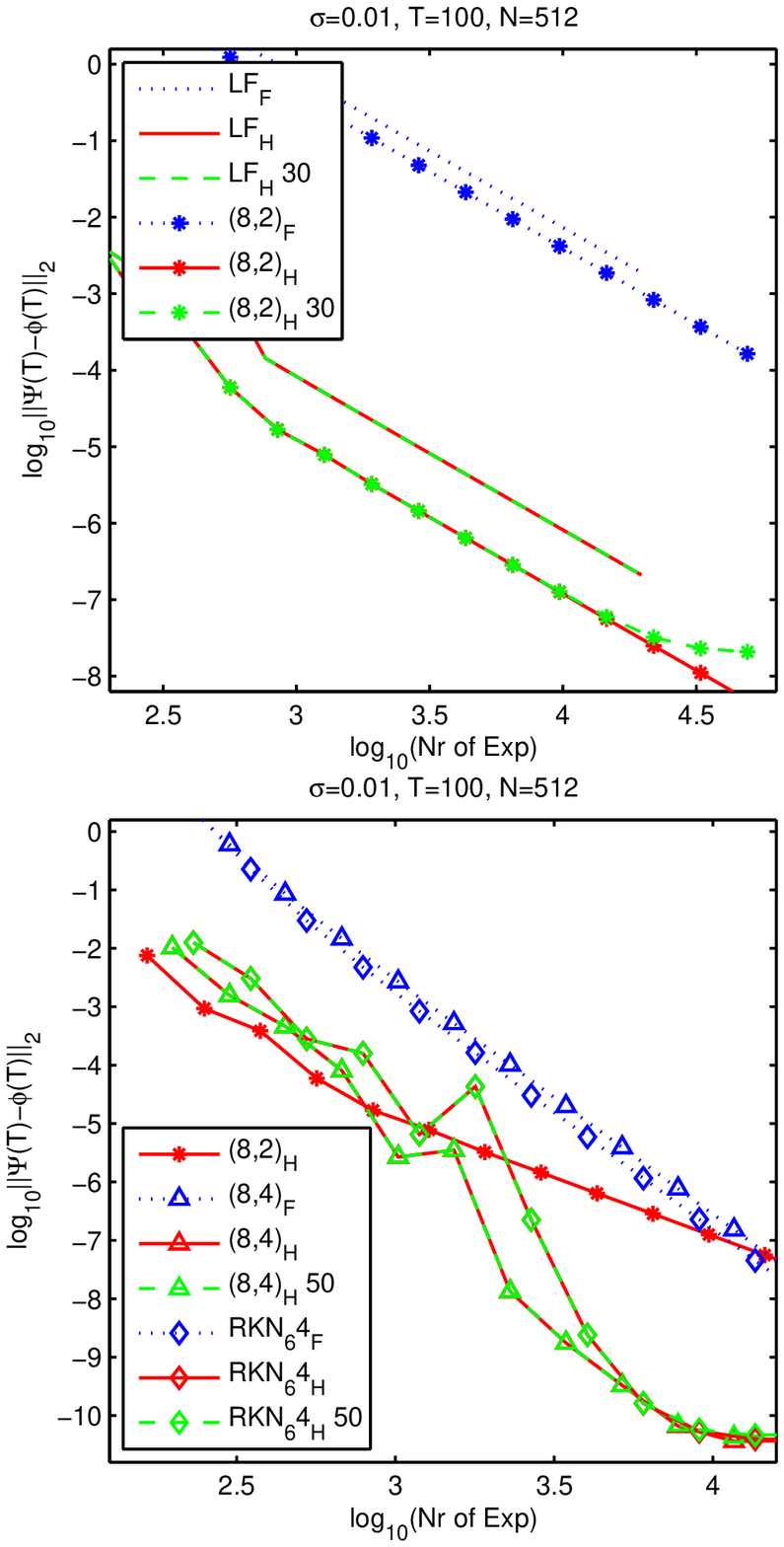}
    \caption{\label{fig4} Comparison of second order (upper panel) and fourth order (lower panel) methods
                for the different splittings and decompositions discussed in the text
                and $\sigma=0.01$.   }
\end{center}
\end{figure}

For a weak nonlinearity, when the system can be considered as a
perturbed harmonic oscillator, we clearly observe that the HO-split is superior to the F-split. In this case,
with a relatively small number of Hermite functions, it is possible
to approach accurately the solution, but this procedure has a
limited accuracy which can deteriorate along the time integration and
depends on the initial conditions. In addition, the methods addressed to
perturbed problems show the best performance: The $(8,2)_H$ method
performs best among the compared when a relatively low accuracy is
desired and the $(8,4)_H$ method takes its place for higher accuracies.

\begin{figure}[!ht]
\begin{center}%
    \includegraphics[width=\figsize]{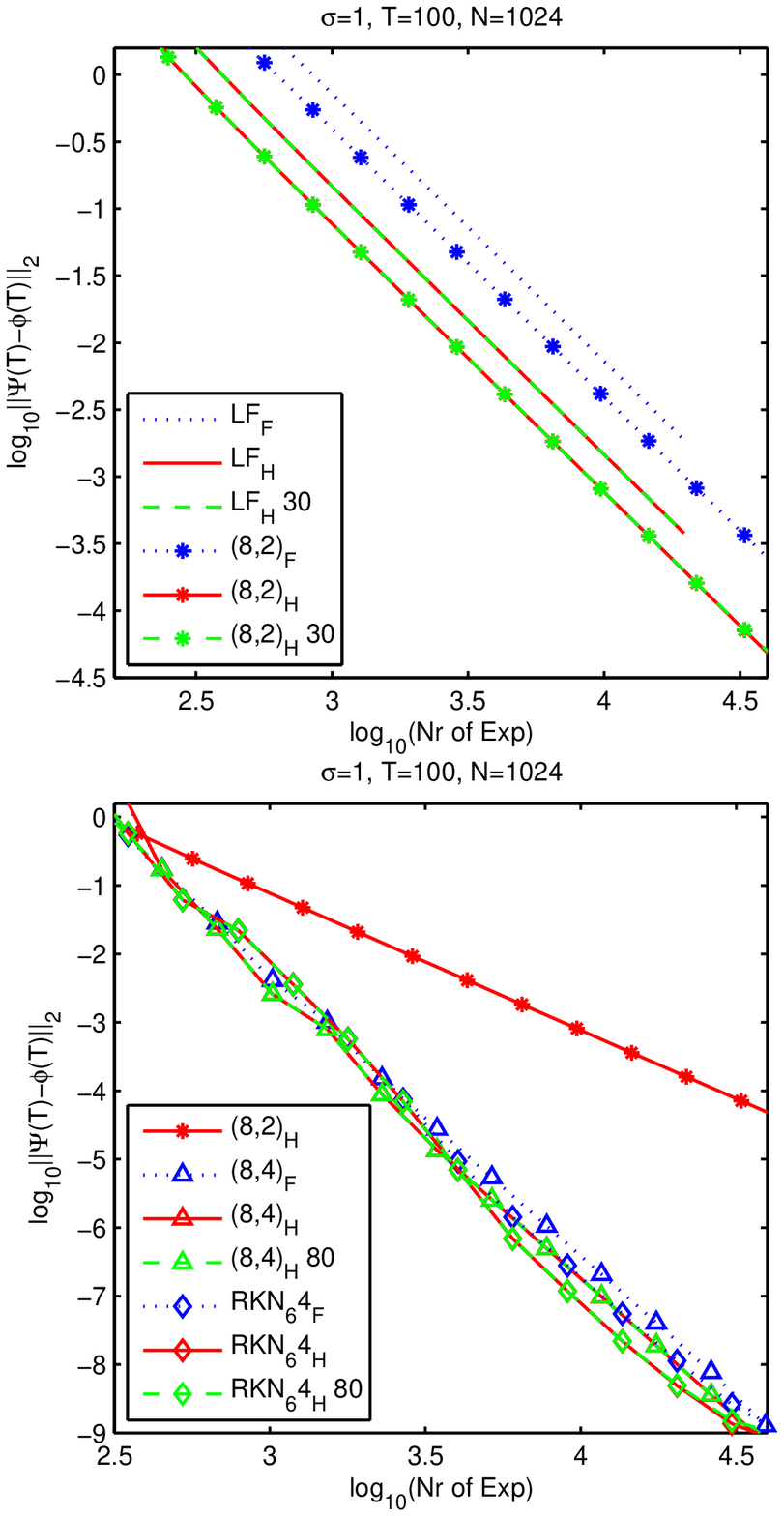}
    \caption{\label{fig5} Same as Fig.~\ref{fig4} for
    $\sigma=1$.    }
\end{center}
\end{figure}

Figure~\ref{fig5} shows the results for $\sigma=1$. It is
qualitatively similar to the previous case yet the HO-split does
not outperform the plain F-split \eqref{FourierSplit} as
significantly as before. Nevertheless, it is important to observe
that, again, the best result is obtained for the HO-split.
Notice that a higher number of Hermite basis functions is
necessary to achieve the same accuracy as the Hermite-Fourier
decomposition.

\begin{figure}[!ht]
\begin{center}%
    \includegraphics[width=\figsize]{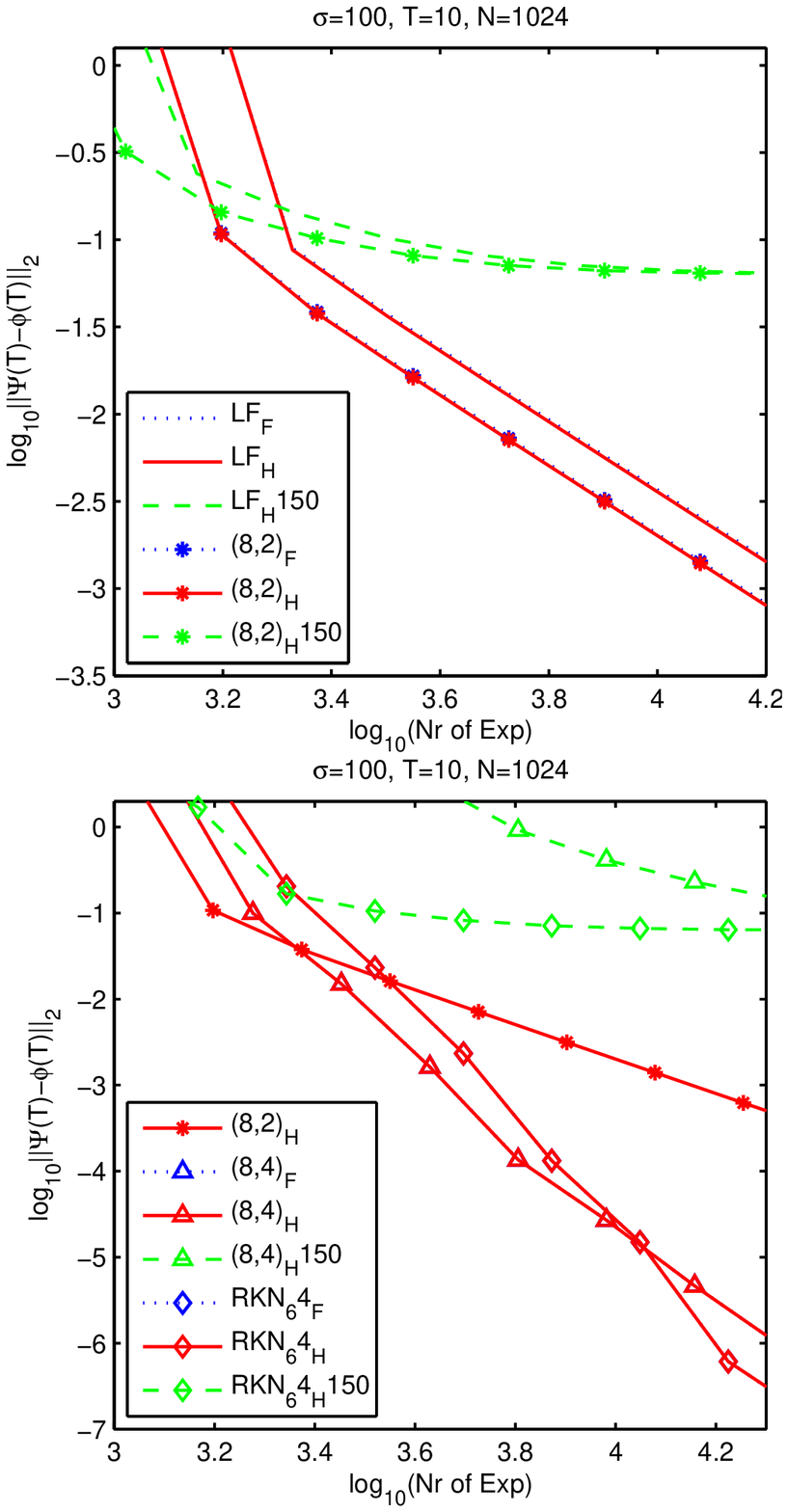}
    \caption{\label{fig6} Same as Fig.~\ref{fig4} for
    $\sigma=100$. F-splittings overlap the
            corresponding (red) Fourier-Hermite curves.
    }
\end{center}
\end{figure}

Figure~\ref{fig6} shows the results for $\sigma=100$.
The HO-split cannot be expected to be particularly useful because
the system is far from being a harmonic oscillator. From
Fig.~\ref{fig2}, we expect a great number of Hermite basis
functions to be required for a sufficiently accurate expansion.
The results in Fig.~\ref{fig6} demonstrate this rather intuitive
expectation, i.e. almost negligible precision despite the large
number of basis terms $M=150$. Remarkably, the proposed HO
decomposition does not show these limitations and reaches the
precision of the F-split \eqref{FourierSplit} because we are
solving the harmonic potential exactly up to spectral accuracy.
For this problem, we observe that the $(8,2)$ method
has the best performance when a relatively low accuracy is
desired, the $(8,4)$ method shows the best performance for medium
accuracies and the $RKN_64$ is the method of choice for higher
accuracies.

\subsection{Time-dependent harmonic oscillator perturbed
by weak quartic anharmonicity} \label{subsec:num:timedep}

We consider now a harmonic oscillator with time-dependent
frequency and perturbed by a weak static quartic anharmonicity
\beq  \label{PertHO}
 i \frac{\partial}{\partial t} \psi =
 \left(\frac{1}{2} p^2 +  \frac{1}{2} \omega^2(t)x^2 \right)\psi +
  \varepsilon_Q \frac14 x^4 \psi .
\eeq
We first consider the case $ \omega^2(t) = A(1+\epsilon \cos(w t))$
with $w=1/2,\ A=4 ,\ \epsilon = 0.1,\ \varepsilon_Q=0.01$.
As reference, we take a highly accurate numerical approximation
as exact solution
 and restrict the spatial domain to $[-20,20]$ for all 
 experiments in this subsection
. We compare the Hermite-Fourier
method with the plain Fourier split, since Hermite polynomials are
not appropriate in a time-dependent setting. For fast oscillating
systems and if high accuracy is needed, the two-exponential
fourth-order approximation of the harmonic oscillator
\eqref{CommutatorFree} can be improved by taking, for example, a
higher order Magnus expansion \eqref{HO(t)sol}. As we have seen,
the solution of \beq  \label{HO}
 i U' = \left(\frac{1}{2} p^2 +  \frac{1}{2} \omega^2(t)x^2
 \right)U
\eeq
 can be written as
\begin{equation}\label{opHO}
 U(t,0) = e^{-i\frac{t}{2}\left(\alpha x^2+\beta (xp+px)+\gamma
 p^2\right)},
\end{equation}
and we have considered, for example, a sixth-order Magnus
integrator \cite{blanes09tme} to approximate the evolution
operator for one fractional time step, $a_ih$, i.e. $U(t,t+a_ih)$.
This is equivalent to take in (\ref{opHO}) $t=a_ih$ and the
parameters $\alpha,\beta,\gamma$ are given by:
\[
\begin{array}{l}
\alpha=\frac{1}{18}(5\omega_1 + 8\omega_2 + 5\omega_3) \\
\quad  +\frac{(a_ih)^2}{486} (\frac{17}{4}(\omega_1^2+\omega_3^2)
+ 8\omega_2^2 + \omega_1\omega_2 +
\omega_2\omega_3 - \frac{37}{2} \omega_1\omega_3), \\
\beta=a_ih\sqrt{\frac{5}{3}}(\omega_3 - \omega_1)(\frac{1}{12} +
\frac{(a_ih)^2}{3240}(5\omega_1 + 8\omega_2 +5\omega_3)),  \\
\gamma=1 + \frac{(a_ih)^2}{54}(\omega_1 - 2\omega_2+ \omega_3)
\end{array}
\]
with $\omega_i=\omega(t_n+c_ih), \ i=1,2,3$ and   $c_1 = 1/2 -
\sqrt{15}/10$, $c_2 = 1/2$, $c_3 = 1/2 +\sqrt{15}/10$,
corresponding to a sixth-order Gaussian quadrature rule. The
obtained operator is then decomposed according to Theorem
\ref{theorem1}.

The results are given in Fig.~\ref{fig:timedep-cos} and corroborate
the superiority of the HO split.

Another interesting example is given by an intense short pulse modeled via
\[
  \omega(t) = w_0\left(1 + \frac{A t}{\cosh^2(B (t-2))}\right).
\]
Varying the parameters $A$ and $B$, the pulse can be sharpened
while keeping its time-average and hence its strength relative to the anharmonicity constant.
Figure~\ref{fig:timedep-pulse} shows the results obtained for a relatively
slow variation of the harmonic potential, for the parameters:
$w_0=4,\ A=0.25,\ B=2$.
Again, the advantageousness of the presented decomposition can be appreciated.
It is already noticeable, that the error introduced by
the time-dependence becomes dominant, and this effect increases for more rapidly varying potentials, e.g. for $B\gg1$. In that case, higher order
approximations of the Magnus expansion are necessary to maintain the benefits
of the Hermite composition.

\begin{figure}[ht]%
\begin{center}%
    \hspace*{-0.13cm}
    \includegraphics[width=\figsize]{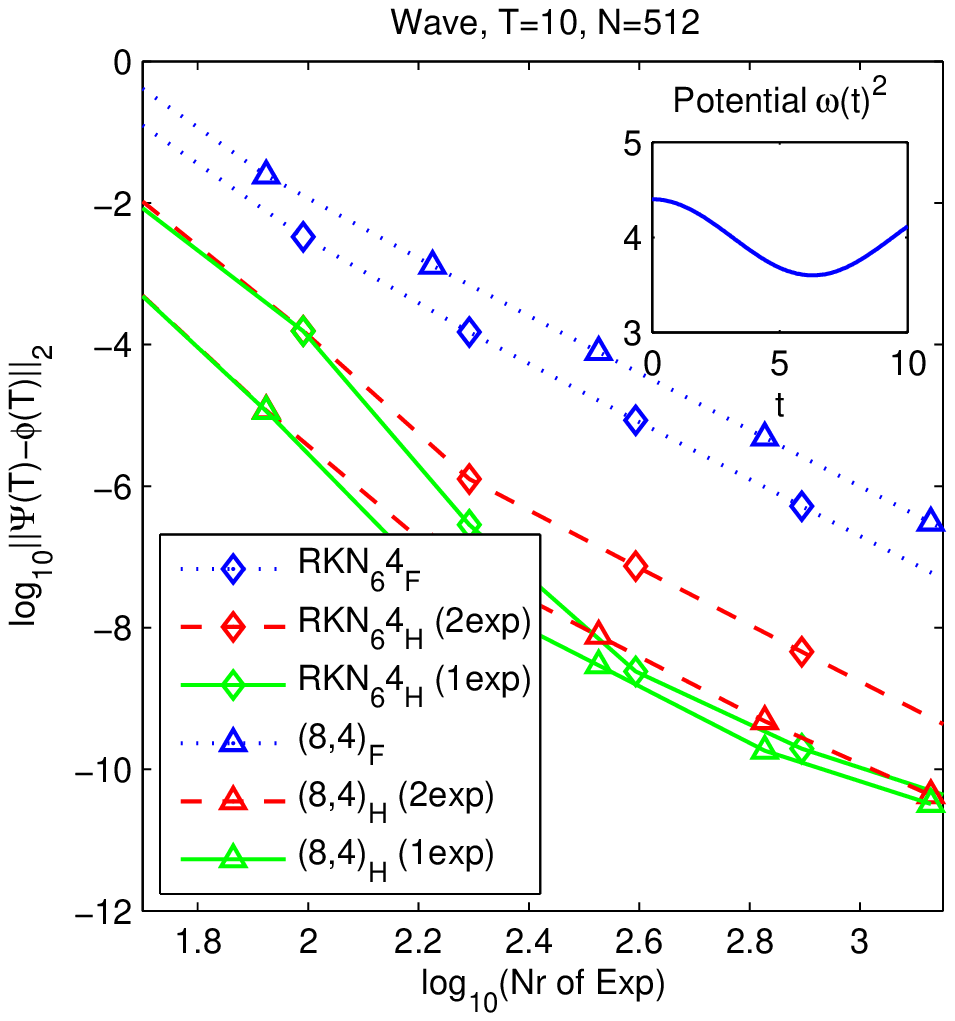}
    \caption{\label{fig:timedep-cos}
        Comparison of Fourier and Fourier-Hermite splittings for two fourth order methods. The (red) dashed line indicates the two-exponential approximation \eqref{CommutatorFree}, the (green) solid line corresponds to the sixth-order Magnus approximation presented in the text \eqref{opHO}.
        The inset shows the evolution of the harmonic trap frequency
        $\omega(t)^2$ and the parameters used for the Hamiltonian are
        $w=1/2,\ A=4 ,\ \epsilon = 0.1,\ \varepsilon_Q=0.01$
        }
\end{center}
\end{figure}

\begin{figure}[ht]%
\begin{center}%
    \hspace*{-0.13cm}
    \includegraphics[width=\figsize]{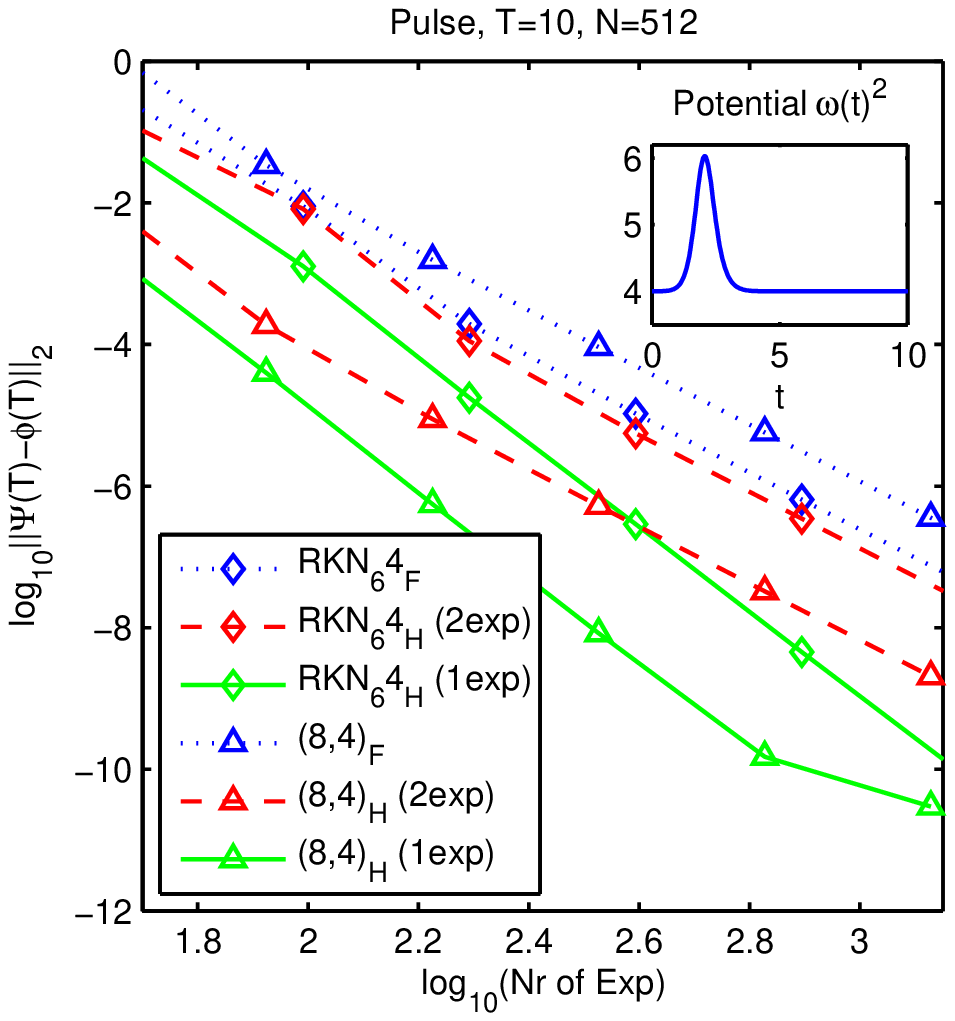}
    \caption{\label{fig:timedep-pulse}
    	Compare Fig.~\ref{fig:timedep-cos}. The parameters used are
        $w_0=4,\ A=0.25,\ B=2$ with a small anharmonicity
        $\varepsilon_Q=0.01$
    }
\end{center}
\end{figure}

\section{Conclusions}
We have presented new Fourier methods for the numerical integration
of perturbations to the time dependent harmonic oscillator which
are useful for both both the Gross-Pitaevskii equation as well as
for the linear Schr\"odinger equation.
Fourier methods have shown a high performance in solving many
different problems which can be split into the kinetic part and a
remainder that is diagonal in the coordinate space.
We have extended the Fourier methods to perturbations of the
time-dependent harmonic potential, and refer to them as Hermite-Fourier
methods. They solve the linear
Schr\"odinger equation with a time-dependent harmonic potential to the
desired order using corresponding Magnus expansions
and up to the accuracy given by the spatial discretization.
These methods are fast to compute since FFTs can be applied
and show a high accuracy when the problem is a small perturbation
of the harmonic potential.
The methods presented in this work extend to perturbed harmonic
potentials in linear quantum mechanics,
c.f. section \ref{subsec:num:timedep},
where it is straightforward to generalize the results to
higher dimensions.

\begin{acknowledgments}%
We would like to acknowledge the referees for their careful
reading and suggestions.
The  authors
acknowledge the support of the Generalitat Valenciana through the
project GV/2009/032. The work of S. Blanes has also been partially
supported by Ministerio de Ciencia e Innovaci\'{o}n (Spain) under
the coordinated project 
MTM2010-18246-C03 (co-financed by the ERDF of the European
Union).
\end{acknowledgments}

\end{document}